\newtheorem{theor}{Theorem}
\newtheorem{remark}[theor]{Remark}
\title{Stenzel's Ricci--flat K\"ahler metrics are not projectively induced}
\author[M. Zedda]{Michela Zedda}
\date{\today}
\subjclass[2010]{53C55; 32H02}
\keywords{Ricci--flat K\"ahler metrics; complex submanifolds; diastasis function}
\address{Dipartimento di Scienze Matematiche, Fisiche e Informatiche\\ Universit\`a di Parma} 
 \email{michela.zedda@unipr.it}
\thanks{This research has been financially supported by the Programme ``FIL-Quota Incentivante'' of University of Parma and co-sponsored by Fondazione Cariparma, and by the group G.N.S.A.G.A. of I.N.d.A.M} 
\begin{document}
\maketitle
\begin{abstract}
We investigate the existence of a holomorphic and isometric immersion in the complex projective space for the complete Ricci--flat K\"ahler metrics constructed by M. B. Stenzel in \cite{stenzel} on the cotangent bundle of a compact, rank one, globally symmetric space. 
\end{abstract}
\section{Introduction}
The problem of classifying Ricci--flat projectively induced metrics is an interesting and long lasting problem in K\"ahler geometry. A K\"ahler metric $g$ on a complex manifold $M$ is projectively induced if $(M,g)$ admits a holomorphic and isometric immersion into the complex projective space $\mathds C{\rm P}^N$, of dimension $N\leq +\infty$, endowed with its Fubini--Study metric $g_{FS}$, i.e. if there exists a holomorphic map $f\!:M\rightarrow \mathds C{\rm P}^N$ such that $f^*g_{FS}=g$. 

Observe that a Ricci-flat projectively induced metric is forced to be not compact, due to a result of D. Hulin \cite{hulin}, stating that a projectively induced K\"ahler--Einstein metric on a compact manifold must have positive scalar curvature.
On the other hand, the flat metric on $\mathds C^n$ is well-known to be projectively induced in $\mathds C{\rm P}^\infty$ (see e.g. \cite{calabi,book}). Since Ricci flat metrics arise as solutions of a Monge-Ampere equation, one should expect that the flat case is the only “algebraic” (namely projectively induced) one. This conjecture was firstly stated in \cite{lsz} by A. Loi, F. Salis and F. Zuddas, where they verify it for metrics admitting a radial K\"ahler potential. 
The first example of Ricci--flat (non-flat) K\"ahler metric constructed on a non-compact manifold is the Taub--NUT metric described by C. LeBrun in \cite{lebrun}. This is a $1$--parameter family of complete K\"ahler metrics on $\mathds C^2$ defined by the K\"ahler potential $\Phi_m(u,v)=u^2+v^2+m(u^4+v^4)$, where $u$ and $v$ are implicitly given by $|z_1|=e^{m(u^2+v^2)}u$, $|z_2|=e^{m(v^2-u^2)}v$. One can prove (see \cite{taubnut}) that for $m>\frac12$ the Taub--NUT metric is not projectively induced. Actually, with the same techniques used in  \cite{taubnut}, one can prove that it is not also for smaller values of the parameter. Although, it is hard to prove it for values of $m$ approaching to $0$, coherently with the fact that for $m=0$ the Taub--NUT metric reduces to be the flat metric on $\mathds C^2$. 
In \cite{tohoku} the author of the present paper in collaboration with A. Loi and F. Zuddas, proved that the Ricci flat Calabi's metrics constructed in \cite{calabi79} on holomorphic line bundles over compact K\"ahler--Einstein manifolds are not projectively induced. As a byproduct they solve a conjecture addressed in \cite{lsz} by proving that {\em any positive multiple} of the Eguchi-Hanson metric on the blow-up of $\mathds C^2$ at the origin is not projectively induced. 
Observe that rescaling a Ricci--flat K\"ahler metric $g$ by a positive factor $c$, gives a Ricci-flat K\"ahler metric $cg$ which could in principle be projectively induced even when $g$ is not. For example, the Bergman metric on bounded symmetric domains of $\mathds C^n$ is not projectively induced when rescaled by small values of $c$, but it is for large values (see \cite{cartan} for more details).

In this paper we are interested in studying the complete Ricci--flat K\"ahler metrics constructed by Stenzel on the cotangent bundle of a compact, rank one, globally symmetric space. These metrics arise in several constructions of Calabi--Yau manifolds (see e.g. \cite{doicu,spotti}) and a generalization to rank two symmetric spaces appeared recently in \cite{rank2}. 
An explicit K\"ahler potential for (almost) all Stenzel metrics, needed in the proof of our result, has been given by T.-C. Lee in \cite{lee}.
In our context, these metrics are of particular interest because their K\"ahler potentials are not rotation invariant (i.e. depending only on the modules of the variables) as those studied before. Further, they are constructed over homogeneous manifolds, which are projectively induced when rescaled by a positive factor (see e.g. \cite[Ch. 3]{book}). In this sense, it is emblematic the behaviour of Cartan--Hartogs domains, which are nonhomogeneous K\"ahler--Einstein manifolds with negative scalar curvature constructed over a bounded symmetric domain, that are projectively induced when rescaled by a large enough factor (see \cite{cartan}). Although, the Ricci--flat case appears to be less flexible, as for example  Calabi's Ricci--flat K\"ahler metrics constructed over flag manifolds are not projectively induced even when rescaled \cite{tohoku}. 

We prove the following theorem, where we give evidence of the above mentioned conjecture for Stenzel's metrics constructed on the complexification of $\mathds C{\rm P}^n$ and $\mathds H{\rm P}^n$:
\begin{theor}\label{miith}
Stenzel's Ricci--flat complete K\"ahler metrics on the complexification of $\mathds C{\rm P}^n$ and $\mathds H{\rm P}^n$ are not projectively induced for any $n> 1$. The same result holds when the metrics are rescaled by a constant $0<c\leq 1$.
\end{theor}

The results are obtained by applying Calabi's criterion, which we describe in the next section.
 It is worth pointing out that the problem of verifying through Calabi's techniques that a metric $g$ is not projectively induced when rescaled by a positive constant $c$, becomes harder as $c$ grows.
In Remark \ref{hardpart} we describe the general problem for the metric on the complexification of $\mathds C{\rm P}^n$, giving an idea of the computational complexity.

The paper is organized as follows. In the next section we set the notations and summarize Calabi's criterion \cite{calabi} for K\"ahler immersions into the complex projective space. The third and last section is devoted to a description of  Stenzel's metrics on the complexification of $\mathds C{\rm P}^n$ and $\mathds H{\rm P}^n$, and to proving our result.

\section{Calabi's criterion}\label{calcrit}
We refer the reader to \cite{book} and references therein for a more detailed overview on the subject.

We denote by $\mathds{C}\mathrm{P}^N$ the complex projective space of dimension $N\leq \infty$, endowed with its Fubini-Study metric $g_{FS}$. Consider homogeneous coordinates $[Z_0,\dots,Z_N]$ and define in the usual way affine coordinates $z_1,\dots, z_N$ on $U_0=\{Z_0\neq 0\}$ by $z_j=Z_j/Z_0$. A K\"ahler potential for the Fubini--Study metric on $U_0$ is given by:
$$
\varphi_{FS}(z)=\log\left(1+\sum_{j=1}^N|z_j|^2\right).
$$

Let $(M,g)$ be a real analytic K\"ahler manifold of dimension $n$ and fix a coordinates system $(z_1,\dots, z_n)$ in a neighborhood $U$ of a point $p\in M$. Let also $\varphi\!: U\rightarrow \mathds{R}$ be a K\"ahler potential for $g$ on $U$, i.e.:
$$
g_{j\bar k}(z)=\frac{\partial^2\varphi(z)}{\partial z_j\partial \bar z_k}.
$$
Observe that it is not restrictive in our context to assume that $g$ is real analytic, since the pull--back through a holomorphic map of the real analytic Fubini--Study metric is forced to be real analytic itself.
Denote by $\tilde\varphi\!: W\rightarrow \mathds{R}$, $\tilde\varphi(z,\bar z)=\varphi(z)$, the analytic extension of $\varphi$ on a neighborhood $W$ of the diagonal in $U\times \bar U$. The {\em diastasis function} ${\rm D}(z,w)$ is defined by:
\begin{equation}\label{diast}
{\rm D}(z,w):=\tilde\varphi(z,\bar z)+\tilde\varphi(w,\bar w)-\tilde\varphi(z,\bar w)-\tilde\varphi(w,\bar z).
\end{equation}
Observe that it follows easily from the definition that once one of its two entries is fixed, the diastasis is a K\"ahler potential for $g$. In particular, we denote ${\rm D}_0(z):={\rm D}(z,0)$. The local Calabi's criterion for K\"ahler immersions into $\mathds C{\rm P}^N$ can be expressed as follows:
\begin{theor}[Calabi's criterion \cite{calabi}]\label{localcrit}
Let $(M,g)$ be a K\"ahler manifold. A neighborhood of a point $p\in M$ admits a K\"ahler immersion into $\mathds C{\rm P}^N$ if and only if the $\infty\times \infty$ hermitian matrix of coefficients $(a_{jk})$ defined by:
\begin{equation}\label{expansion}
e^{D_0(z)}-1=\sum_{j,k=0}^\infty a_{jk}z^{m_j}\bar z^{m_k},
\end{equation}
is positive semidefinite of rank at most $N$.
\end{theor}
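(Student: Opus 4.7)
The plan is to prove Calabi's criterion via the hereditary property of the diastasis combined with an explicit Gram matrix factorization of the coefficients. The setup reduces the immersion problem to a purely algebraic condition on the coefficients of $e^{D_0}-1$.

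First, I would recall Calabi's hereditary principle: if $\iota\!:(M,g)\hookrightarrow(\tilde M,\tilde g)$ is a K\"ahler immersion, then the diastasis pulls back, $D^M(z,w)=\tilde D(\iota(z),\iota(w))$. This follows directly from the definition~(\ref{diast}) once one notes that $\iota^*\tilde\varphi$ differs from any local K\"ahler potential $\varphi$ for $g$ by the real part of a holomorphic function, and such terms cancel in~(\ref{diast}). Next I would compute the diastasis of Fubini--Study at the origin: plugging $\varphi_{FS}(z)=\log\bigl(1+\sum|z_j|^2\bigr)$ into~(\ref{diast}) with $w=0$ yields
$$D_0^{FS}(z)=\log\Bigl(1+\sum_{j=1}^N|z_j|^2\Bigr),\qquad e^{D_0^{FS}(z)}-1=\sum_{j=1}^N|z_j|^2.$$

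For the forward direction, given a K\"ahler immersion $f$ from a neighborhood of $p$ into $\mathds{C}{\rm P}^N$, I compose with an isometry of $g_{FS}$ (using transitivity of the projective unitary group) so that $f(p)=[1\!:\!0\!:\!\cdots\!:\!0]\in U_0$, with affine components $f_1,\ldots,f_N$ vanishing at $p$. The hereditary principle then gives $e^{D_0(z)}-1=\sum_{j=1}^N|f_j(z)|^2$. Expanding each $f_l(z)=\sum_k c_{lk}z^{m_k}$ and matching with~(\ref{expansion}) yields
$$a_{jk}=\sum_{l=1}^{N} c_{lj}\,\overline{c_{lk}},$$
i.e.\ $(a_{jk})=C^*C$ for the $N\times\infty$ matrix $C=(c_{lk})$. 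This is manifestly Hermitian, positive semidefinite, and of rank at most $N$.

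For the converse, given $(a_{jk})$ Hermitian PSD of rank $r\leq N$, I would factor it as $a_{jk}=\sum_{l=1}^{r}c_{lj}\overline{c_{lk}}$ by spectral decomposition of the associated Hermitian form, set $f_l(z):=\sum_k c_{lk}z^{m_k}$, and define
$$f(z)=\bigl[\,1\,:\,f_1(z)\,:\,\cdots\,:\,f_r(z)\,:\,0\,:\,\cdots\,:\,0\,\bigr]\in\mathds{C}{\rm P}^N.$$
By construction $\sum_l|f_l(z)|^2=e^{D_0(z)}-1$, so $f^*\varphi_{FS}=D_0$ on the common domain of convergence, and since $D_0$ is a K\"ahler potential for $g$, we conclude $f^*g_{FS}=g$.

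The main obstacle I expect is the convergence issue in the converse: I must guarantee that each $f_l$ converges on a common neighborhood of $p$, and (when $r=\infty$) that the sum $\sum_l|f_l|^2$ converges and equals $e^{D_0}-1$ there. My strategy is to exploit the fact that $e^{D_0(z)}-1$ is real analytic in a fixed polydisk around $p$, so its coefficients $a_{jk}$ decay geometrically along the diagonal; a Cauchy--Schwarz estimate $|c_{lk}|^2\leq a_{kk}$ (from the PSD factorization) then dominates each $|f_l(z)|$ by the same polydisk-convergent series and yields uniform convergence of the sum via monotone/dominated convergence on the coefficients. This technical verification, together with the infinite-dimensional spectral factorization of $(a_{jk})$, is the heart of the argument.
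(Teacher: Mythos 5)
The paper does not actually prove this statement: it is quoted from Calabi's paper \cite{calabi}, and the only part the author uses is the easy necessity direction, distilled into Proposition \ref{adaptedcriterion} (a negative diagonal entry of $(a_{jk})$ destroys positive semidefiniteness, hence the immersion). So there is no paper-internal proof to compare against; judged on its own, your reconstruction is essentially Calabi's original argument and its outline is correct. The hereditary property of the diastasis, the computation $D_0^{FS}(z)=\log\left(1+\sum_j|z_j|^2\right)$, and the translation of the identity $e^{D_0(z)}-1=\sum_l|f_l(z)|^2$ into the Gram factorization $a_{jk}=\sum_l c_{lj}\overline{c_{lk}}$, i.e.\ $(a_{jk})=C^*C$ with ${\rm rank}\leq N$, is exactly the right pivot in both directions; note also that a holomorphic map with $f^*g_{FS}=g$ is automatically an immersion since $g$ is positive definite, and composing with an element of ${\rm PU}(N+1)$ to arrange $f(p)=[1:0:\dots:0]$ (shrinking so the image stays in $U_0$) is legitimate.

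Three points deserve tightening. First, your one-line justification of the hereditary property also needs the observation that the analytic extension commutes with the holomorphic immersion: $\tilde\varphi(\iota(z),\overline{\iota(w)})$ is holomorphic in $z$, antiholomorphic in $w$, and agrees with $\iota^*\tilde\varphi$ on the diagonal, so by uniqueness of analytic continuation it \emph{is} the extension; only then does the cancellation of $h+\bar h$ in \eqref{diast} finish the argument. Second, the coefficient interchange you defer to the converse is equally needed in the forward direction when $N=\infty$: to extract $a_{jk}=\sum_{l=1}^{\infty}c_{lj}\overline{c_{lk}}$ from $\sum_l|f_l|^2=e^{D_0}-1$ you must justify expanding a convergent infinite sum of squares as a double power series (Cauchy integral formulas for the $c_{lk}$ on a small polydisk plus monotone convergence do this). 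Third, your dominating series in the converse is not literally ``the same polydisk-convergent series'': from $\sum_k a_{kk}\rho^{2|m_k|}<\infty$ and $\sum_l|c_{lk}|^2=a_{kk}$ one gets $|f_l(z)|\leq\sum_k\sqrt{a_{kk}}\,r^{|m_k|}$, and this majorant converges only on a strictly smaller polydisk $r<\rho$, via Cauchy--Schwarz against $\sum_k(r/\rho)^{2|m_k|}$ and the polynomial growth of the number of multi-indices of a given degree; the same bound justifies the interchange $\sum_l|f_l|^2=\sum_{j,k}a_{jk}z^{m_j}\bar z^{m_k}$. Since the statement is local, the shrinkage is harmless, and with these repairs your proof is complete and faithful to Calabi's.
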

Here we are using a multi-index notation $z^{m_j}:=z_1^{m_{j,1}}\cdots z_n^{m_{j,n}}$, where the $n$-tuples $m_j=(m_{j,1},\dots, m_{j,n})$ satisfies $j<k$ when $|m_j|<|m_k|$, and those with the same module follow a lexicographic order.

\begin{remark}\label{adaptedcriterion}\rm
Let $(M,g)$ be a K\"ahler manifold, $(z_1,\dots,z_n)$ be local coordinates in a neighborhood of a point $p\in M$ and let $\varphi$ be a K\"ahler potential centered at $p$. Observe that:
$$
\frac{\partial^{2k}e^{D_0(z)}}{\partial z_1^k\partial \bar z_1^k}|_0<0,\quad k\geq 1,
$$
are (up to multiplication by a positive constant) elements on the diagonal of the matrix $(a_{jk})$ in \eqref{expansion}, thus if one of them is negative, $g$ is not projectively induced.
\end{remark}

\section{Proof of Theorem \ref{miith}}
We consider the explicit formulas given by T.-C. Lee  in \cite{lee} for the complete Ricci--flat K\"ahler metrics on the complexification of  $\mathds C{\rm P}^n$ and $\mathds H{\rm P}^n$. We refer the reader to \cite{stenzel} for the general construction and the proof of their completeness.

\subsection{Complete Ricci--flat K\"ahler metric on the complexification of $\mathds C{\rm P}^n$} Consider homogeneous coordinates $(z,w)=(z_0,\dots, z_n,w_0,\dots, w_n)$ on $\mathds C{\rm P}^n\times \mathds C{\rm P}^n$ and let 
$$
{\rm M}_{II}^{2n}:=\mathds C{\rm P}^n\times \mathds C{\rm P}^n\setminus Q_\infty,
$$
 where $Q_\infty:=\{(z,w)\in \mathds C{\rm P}^n\times \mathds C{\rm P}^n|\, \sum_{j=0}^nz_jw_j=0\}$. Fix affine coordinates $(1,z_1,\dots, z_n,1,w_1\dots, w_n)$ and consider the K\"ahler metric $g_{II}$ on ${\rm M}_{II}^{2n}$ defined by the K\"ahler potential $f(\mathcal N)$, where:
\begin{equation}\label{n2}
\mathcal N:=\frac{\left(1+\sum_{j=1}^n|z_j|^2\right)\left(1+\sum_{j=1}^n|w_j|^2\right)}{|1+\sum_{j=1}^nz_jw_j|^2},
\end{equation}
and $f$ is a solution to:
\begin{equation}\label{eqmii}
\left(2\mathcal N-1\right)\mathcal N^{n-1}\left(f'\right)^{2n}+2\left(\mathcal N-1\right)\mathcal N^n\left(f'\right)^{2n-1}f''=1.
\end{equation}
Using \eqref{eqmii}, we get $f'(\mathcal N)=\mathcal N^{-1/2}$ (see \cite[p.321]{lee}), from which follows  that:
\begin{equation}\label{fexpansion2}
f'(1)=1,\quad f''(1)=-\frac12,\quad f'''(1)=\frac34,\quad f^{iv}(1)=-\frac{15}8.
\end{equation}
We can now prove the first part of Theorem \ref{miith}.
\begin{proof}[Proof of Theorem \ref{miith} for $M_{II}^{2n}$]
Let $f(\mathcal N)$ be defined by \eqref{n2} and \eqref{fexpansion2}. By Remark \ref{adaptedcriterion}, it is enough to prove that for $0<c\leq 1$:
$$
\frac{\partial^{8}e^{cD_0(z)}}{\partial z_1^4 \partial \bar z_1^4}|_0<0,
$$
where by definition \eqref{diast}:
$$
D_0(z)=f(\mathcal N)+f(1)-f\left(\frac{1}{1+\sum_{j=1}^nz_jw_j}\right)-f\left(\frac{1}{1+\sum_{j=1}^n\bar z_j\bar w_j}\right).
$$
Since the derivative is evaluated at the origin and we are deriving only with respect to $z_1$, $\bar z_1$, let us restrict ourselves to $z_2=\dots=z_n=0$,  $w_1=\dots=w_n=0$, i.e. $D_0(z)=f(1+|z_1|^2)-f(1)$. 
Compute:
\begin{equation}
\frac{\partial e^{cD_0}}{\partial z_1}=ce^{cD_0} f' \bar z_1; \nonumber
\end{equation}
\begin{equation}
\frac{\partial^2 e^{cD_0}}{\partial z_1^2}=ce^{cD_0}(cf'^2+f'') \bar z_1^2,\nonumber
\end{equation}

\begin{equation}
\frac{\partial^3 e^{cD_0}}{\partial z_1^3}=ce^{cD_0}(c^2f'^3+3cf'f''+f''') \bar z_1^3,\nonumber
\end{equation}
\begin{equation}\label{ef4}
\frac{\partial^4 e^{cD_0}}{\partial z_1^4}=ce^{cD_0}(c^3f'^4+6c^2f'^2f''+3cf''^2+4cf'f'''+f^{iv}) \bar z_1^4.
\end{equation}
Plugging \eqref{fexpansion2} into \eqref{ef4} we get:
\begin{equation}
\begin{split}
\frac{\partial^8 e^{cD_0}}{\partial z_1^4\partial \bar z_1^4}|_0&=4!c(c^3f'(1)^4+6c^2f'(1)^2f''(1)+3cf''(1)^2+4cf'(1)f'''(1)+f^{iv}(1))\\
&=\frac{c}{3}(8c^3-24c^2+30c-15),
\end{split}\nonumber
\end{equation}
which is less than $0$ for any $0<c<a$, with $a=\frac{(4+4\sqrt5)^{1/3}}{4}-\frac{1}{(4+4\sqrt5)^{1/3}}+1>1$. 
\end{proof}

\begin{remark}\label{hardpart}\rm
We believe that $\left({\rm M}_{II}^{2n},cg_{II}\right)$ is not projectively induced also for any value of $c>1$. Let $D_0(x)=f(1+x)-f(1)=\sum_{j=1}^\infty a_j\frac{(\mathcal N-1)^j}{j!}$.
Accordingly with \eqref{n2} $a_1=1$, and for $j\geq 2$ a direct computation gives:
$$
a_{j}=\frac{d^j}{d\mathcal N^j}f(\mathcal N)|_{\mathcal N=1}=\frac{(-1)^{j-1}\prod_{s=1}^{j-1}(2s-1)}{2^{j-1}}.
$$
Thus:
$$
D_0(x)=\sum_{j=1}^\infty \frac{(-1)^{j-1}\prod_{s=1}^{j-1}(2s-1)}{2^{j-1}}\frac{(\mathcal N-1)^j}{j!},
$$
which implies:
$$
{\rm exp}\left(D_0(x)\right)-1=\sum_{n=1}^{\infty}\sum_{k=1}^nB_{n,k}(a_1,\dots, a_{n-k+1})\frac{(\mathcal N-1)^k}{k!},
$$
where $B_{n,k}(a_1,\dots, a_{n-k+1})$ are the exponential Bell polynomials (see \cite[p.133]{comtet}). Comparing with Qi and Guo \cite[p.8]{qiguo}, one gets that a sufficient condition for $cg_{II}$ to be not projectively induced is that for any $c>0$, there exists a big enough $n$ such that:
$$
\sum_{k=1}^nB_{n,k}(ca_1,\dots,c a_{n-k+1})=\frac1{2^{n}}\sum_{k=1}^n\frac{(-2c)^k}{k!}\sum_{l=0}^k(-1)^l{k\choose l}\prod_{q=0}^{n-1}(l-2q)<0.
$$
\end{remark}

\subsection{Complete Ricci--flat K\"ahler metric on the complexification of $\mathds C{\rm H}^n$}
Consider:
$$
{\rm M}_{III}^{4n}={\rm Gr}(2,2n+1,\mathds C)\setminus H_\infty,
$$ where ${\rm Gr}(2,2n+1,\mathds C)$ defines the complex Grassmanian of $2$--planes through the origin and, in homogeneous coordinates $(z,w)=(z_1,\dots, z_{2n+2},w_1,\dots, w_{2n+2})$, $H_\infty:=\left\{(z,w)|\, \sum_{j=1}^{n+1}\left(z_{2j}w_{2j-1}-z_{2j-1}w_{2j}\right)=0\right	\}$. Set inhomogeneous coordinates $(z_1,\dots, z_{2n},1,0,w_1,\dots, w_{2n},0,1)$ and denote $z=(z_1,\dots, z_{2n})$, $w=(w_1,\dots, w_{2n})$. Let:
\begin{equation}\label{n3}
\mathcal N:=\frac{(1+||z||^2)(1+||w||^2)-\sum_{j,k=1}^{2n}z_j\bar w_j\bar z_{k}w_{k}}{|\sum_{j=1}^{n}\left(z_{2j}w_{2j-1}-z_{2j-1}w_{2j}\right)-1|^2},
\end{equation}
and $f(\mathcal N)$ be a solution to:
\begin{equation}\label{equationmiii}
(2\mathcal N-1)\mathcal N^{2n-2}(f')^{4n}+2(\mathcal N-1)\mathcal N^{2n-1}(f')^{4n-1}f''=1.
\end{equation}
We can assume without loss of generality that $f(1)=0$. We have:
\begin{equation}\label{expansioniii}
\begin{split}
&f'(1)=1,\quad f''(1)=-\frac{n}{2n+1},\quad f'''(1)=\frac{6n^2+2n+1}{2(2n+1)^2},\\
 &f^{iv}(1)=-\frac{30n^3+22n^2+15n+2}{2(2n+1)^3}.
 \end{split}
\end{equation}
Observe that by \eqref{diast}, \eqref{n3} and since $f(1)=0$, we have:
$$
D_0(z)=f(\mathcal N)-f\left(\frac{1}{1-\sum_{j=1}^{n}\left(z_{2j}w_{2j-1}-z_{2j-1}w_{2j}\right)}\right)-f\left(\frac{1}{1-\sum_{j=1}^{n}\left(\bar z_{2j}\bar w_{2j-1}-\bar z_{2j-1}\bar w_{2j}\right)}\right).
$$
We are now in the position of concluding the proof of Theorem \ref{miith}.

\begin{proof}[Proof of Theorem \ref{miith} for $M^{4n}_{III}$]
We proceed as in the first part of the proof of Theorem \ref{miith}: we restrict ourselves to $z_2=\dots=z_{2n}=0$, $w_2=\dots=w_{2n}=0$, getting $D_0(z)=f(\mathcal N)=f(1+|z_1|^2)$. Thus, by \eqref{ef4} we have:
$$
\frac{\partial^8 e^{cD_0}}{\partial z_1^4\partial \bar z_1^4}|_{0}= 4!c(c^3f'(1)^4+6c^2f'(1)^2f''(1)+3cf''(1)^2+4cf'(1)f'''(1)+f^{iv}(1)),
$$
and as before, it is enough to prove that this quantity is negative for $0<c\leq1$ and $n\geq 2$. 
By \eqref{expansioniii} we then obtain:
\begin{equation}\label{finaleq}
\begin{split}
\frac{\partial^8 e^{cD_0}}{\partial z_1^4\partial \bar z_1^4}|_{0}=&4!\left(-\frac{6n}{2n+1}+\frac{15n^2+4n+2}{(2n+1)^2}-\frac{30n^3+22n^2+15n+2}{2(2n+1)^3+1}\right)\\
=&12c\left(2c^3-\frac{12n}{2n+1}c^2+\frac{2(15n^2+4n+2)}{(2n+1)^2}c-\frac{30n^3+22n^2+15n+2}{(2n+1)^3}\right),
\end{split}
\end{equation}
which is negative for any $n\geq 2$, $0<c\leq 1$. In fact, for $n=2$ it is equal to $
\frac{24}{25}c\left(25c^3-60c^2+70c-36\right)$, which is less than $0$ for $0<c<a$ with:
$$
a=\frac{(378+6\sqrt{11955})^{1/3}}{15}-\frac{22}{5(378+6\sqrt{11955})^{1/3}}+\frac45>1.
$$
Further, the RHS of \eqref{finaleq} is decreasing in $n$, since its derivative with respect to $n$ gives:
$$
\frac{24\left((-24c^2+44c-23)n^2+(-24c^2+14c+8)n-6c^2-4c-\frac32\right)}{(2n+1)^4},
$$
which, again, is negative since its numerator is negative and equal to $\frac{5}2(-60c^2+80c-31)$ when $n=2$, with derivative with respect to $n$ equal to
$$
-24(2n+1)c^2+2(44n+7)c-2(23n+4),
$$
and thus negative for any $c$ and any $n\geq 2$.
\end{proof}

\begin{remark}\rm For the case of the Stenzel's Ricci--flat K\"ahler metric $g$ on the cotangent bundle of $S^n$, identified with the affine quadric:
$$
Q^n:=\{(z_0,z_1,\dots, z_n)\in \mathds C^{n+1}|\ \sum_{j=0}^nz_j^2=1\},
$$
and with $\mathcal N$ defined as the restriction of $\sum_{j=0}^n|z_j|^2$ to $Q^n$, a K\"ahler potential  is given by $f(\mathcal N)$, where $f$ is a solution to:
\begin{equation}\label{Qn}
(f')^{n-1}\left(\mathcal Nf'+(\mathcal N^2-1)f''\right)=1.
\end{equation}
When $n=2$, $g$ is the Eguchi--Hanson metric, which has been proved in \cite{tohoku} to be not projectively induced even when rescaled by a positive factor. 

For $n>2$, with the same approach used in the proof of Theorem \ref{miith}, from \eqref{Qn} and \eqref{ef4} one gets that $cg$ is not projectively induced for any $c<\frac1{n+2}$. Although, the result can not be easily improved with similar techniques. 
\end{remark}

\end{document}